\newcommand{\tv}{\mathbf{t}}
\newcommand{\Xv}{\mathbf{X}}
\newcommand{\xv}{\mathbf{x}}
\newcommand{\Yv}{\mathbf{Y}}
\newcommand{\yv}{\mathbf{y}}
\newcommand{\Am}{\mathbf{A}}
\newcommand{\Bm}{\mathbf{B}}
\newcommand{\Sig}{\mathbf{\Sigma}}
\newcommand{\mv}{\mathbf{m}}
\newcommand{\Sym}{\mathfrak{S}}
\newcommand{\bLambda}{\mathbf{\Lambda}}
\newcommand{\threebythree}[9]{\left[
    \begin{array}{ccc} #1 &#2 &#3 \\ #4 &#5 &#6\\ #7 &#8 &#9 \end{array}
\right]}     
\newcommand{\CovFive}
{\left[
    \begin{array}{ccccc} 
    1 & 0 & 0 & 0 & -1/5 \\
    0 & 1/2 & 0 & -1/8 & -1/10 \\
    0 & 0 & 2/9 & -1/12 & -1/15\\
    0 & -1/8 & -1/12 & 3/16 & -1/20 \\
    -1/5 & -1/10 & -1/15 & -1/20 & 4/25
  \end{array} \right]
  }
 \newtheorem{theorem}{Theorem}
\newtheorem{lemma}[theorem]{Lemma}
\newtheorem{corollary}[theorem]{Corollary}
\theoremstyle{remark}
\newtheorem{remark}[theorem]{Remark}
\theoremstyle{definition}
\newtheorem{example}[theorem]{Example}
\numberwithin{equation}{section}
\numberwithin{theorem}{section} 
\begin{document}

\title{Integer Partitions Probability Distributions}
 \author{Andrew V. Sills}

\maketitle

\begin{abstract}
Two closely related discrete probability distributions are introduced.
In each case the support is a set of vectors in $\mathbb{R}^n$ obtained from the
partitions of the fixed positive integer $n$.  These distributions arise naturally
when considering equally-likely random permutations on the set of $n$ letters.
For one of the distributions, the expectation vector and covariance matrix is
derived.  For the other distribution, conjectures for several elements of the 
expectation vector are provided.

\vskip 5mm
KEY WORDS: Integer partitions, random partitions,
symmetric group, discrete probability distribution. 
\end{abstract}

\section{Background}
The study of random integer partitions is not new, but past work 
(see~\cite{F93};~\citet{CCH};~\cite{M02};~\cite{M05}) has focused on probability models
where partitions are chosen with equal probability.  
In contrast, here we assign probabilities to a given partition based on the probability that
a given permutation has cycle type indexed by that partition, as explained in detail
below.

\section{Introduction and Notations}
Let $n$ and $l$ denote a nonnegative integers.
A \emph{partition} $\lambda$ of size $n$ and length $l$ is an $l$-tuple 
$(\lambda_1, \lambda_2, \dots, \lambda_l)$ 
of integers where
 \[ \lambda_1 \geq \lambda_2 \geq \dots \lambda_l \geq 1 \] and
 \[ \lambda_1 + \lambda_2 + \cdots + \lambda_l = n. \]
 Each $\lambda_j$ is a \emph{part} of $\lambda$.  Notice that the unique partition of $0$ is the
\emph{empty partition} $\emptyset$, which has length $0$.
 
 The \emph{multiplicity} $m_j = m_j(\lambda) $ of part $j$ in $\lambda$ is the number of 
 times $j$
 appears as a part in $\lambda$.  Let the \emph{multiplicity vector} $\mv(\lambda)$ of $
 \lambda$ be given by
 \[ \mv(\lambda) := (m_1, m_2, \dots, m_n)'. \]  (For typesetting convenience, we indicate 
 a column vector
 as the transpose of a row vector, with transpose indicated by the prime ($'$) symbol.)

For a partition $\lambda$ of size $n$ and length $l$, define the corresponding 
\emph{partition vector}
$\bLambda := \bLambda(\lambda) = (\lambda_1, \lambda_2, \dots, \lambda_l, 
\lambda_{l+1} , \dots, \lambda_n)'$ where
$\lambda_j :=0$ for all $l<j\leq n$.  In other words, we take a partition $\lambda$ of $n$, 
pad it on the right with
$0$'s until its length is $n$, convert it to a column vector, and call this vector $\bLambda
$.

 For a full introduction to integer partitions the standard reference is~\citet{A}.  For a 
 gentler 
introduction to partitions suitable for undergraduates, see~\cite{AE}.

 Let $\Sym_n$ denote the symmetric group of degree $n$.  Each permutation in $
 \Sym_n$ has a cycle type 
corresponding to a partition of $n$.
For example, in $\Sym_3$, permutations (written here in disjoint cycle notation)
 $(1,2,3)$ and $(1,3,2)$ have cycle type $(3)$;
permutations $(1,3)(2)$, $(1,2)(3)$, and $(1)(2,3)$ have cycle type $(2,1)$; and
the identity permutation $(1)(2)(3) $ has cycle type $(1,1,1)$. 
\begin{table} \label{table1}
\caption{The seven partitions of $5$, and their corresponding vectors} \begin{center}
\begin{tabular}{|c|c|c|}
\hline
$\lambda$ & $\mv(\lambda)$ & $\bLambda(\lambda)$ \\
\hline
$(5)$ & $(0,0,0,0,1)'$ & $(5,0,0,0,0)'$ \\
$(4,1)$ & $(1,0,0,1,0)'$ & $(4,1,0,0,0)'$ \\
$(3,2)$ & $(0,1,1,0,0)'$ & $(3,2,0,0,0)'$ \\
$(3,1,1)$ & $(2,0,1,0,0)'$ & $(3,1,1,0,0)'$ \\
$(2,2,1)$ & $(1,2,0,0,0)'$ & $(2,2,1,0,0)'$ \\
$(2,1,1,1)$ & $(3,1,0,0,0)'$ & $(2,1,1,1,0)'$ \\
$(1,1,1,1,1)$ & $(5,0,0,0,0)'$ & $(1,1,1,1,1)'$ \\
\hline
\end{tabular} \end{center}
\end{table}

   Fix a positive integer $n$.  Select a permutation at random (each permutation is 
equally likely and
thus is chosen with probability $1/n!$).  Let the random variable $\Xv$ equal the 
partition vector $\bLambda(\lambda)$ for the
partition $\lambda$
corresponding to the cycle type of the random permutation.  Then let $\Yv := \Yv^{(n)} = 
\mv(\lambda)$.  
The support
of the distribution of $\Yv$ is those $n$-vectors $(y_1, y_2, \dots, y_n)'$ of nonnegative
integers such that 
$y_1 + 2 y_2 + 3 y_3 + \cdots n y_n = n$, i.e. those vectors that are multiplicity vectors 
for partitions
of $n$.   For any given $\Yv$, the $j$th component $Y_j$ is equal to the multiplicity 
$m_j(\lambda)$ of
$j$ in the partition $\lambda$, or, equivalently, the number of $j$-cycles in the randomly 
selected
permutation from $\Sym_n$.  (See Table 1 for the seven partitions of $5$ and their
corresponding vectors that comprise the support of $\Xv$ and $\Yv$ in the $n=5$ case.)

From the theory of the symmetric group, we know that the number of permutations in $
\Sym_n$
of cycle type $\lambda$ is \[ \frac{n!}{1^{m_1} 2^{m_2} \cdots n^{m_n} m_1! m_2! \cdots 
m_n!} ,\]
see, e.g., \citet[p. 3, Eq. (1.2)]{S}.  Thus, we may deduce that for a given permutation 
with cycle
type described by the partition $\lambda$,
 
    \[ P\big( \Xv = \bLambda(\lambda)\big) = P\big(\Yv = \mv(\lambda)\big) = \frac{1}
{1^{m_1} 2^{m_2} \cdots n^{m_n} m_1! m_2! \cdots m_n!}. \]
    
 That $\Xv$ and $\Yv$ are in fact probability distributions follows from the fact that
 \begin{equation} \label{fine} \sum \frac{1}{1^{m_1} 2^{m_2} \cdots n^{m_n} m_1! m_2! 
 \cdots m_n!} = 1, \end{equation}
where the sum is extended over all partitions $\lambda$ of $n$.
Eq.~\eqref{fine} was proved by N. J.~\citet[p. 38, Eq. (22.2)]{F}.

  In Section 2, we will study the distribution of $\Yv$, and give an explicit
formula for its expectation vector and covariance matrix. In Section 3, we find that the
expectation of $\Xv = (X_1, X_2, \dots, X_n)$ is more subtle than that of $\Yv$, and so 
we content ourselves
with some partial results and conjectures about $E(X_j)$ for various $j$.
    
\section{The distribution of the random variable $\Yv$} 
 
Before considering arbitrary $n$, let us record the pmf, expectation, and covariance
of $\Yv$ for $n=3$ and $n=5$. 
 \begin{example}[$n=3$ case] 
 In the case arising from $\Sym_3$, we have the following pmf for $\Yv$:   
\[  P(\Yv=\yv) = \left\{ \begin{array}{ll}
  1/6, & \mbox{  if $\yv = (3,0,0)'$}, \\
  1/2, & \mbox{  if $\yv=(1,1,0)'$}, \\
  1/3, & \mbox{  if $\yv=(0,0,1)'$}, \\
  0, & \mbox{   otherwise} .
\end{array}  \right.
\]
Observe that $E(\Yv) = (1, \frac 12, \frac 13)'$ and the covariance matrix
\[ \Sig =  
\threebythree{1}0{-1/3}0{1/4}{-1/6}{-1/3}{-1/6}{2/9}. \]
\end{example}

 \begin{example}[$n=5$ case]
 In the case arising from $\Sym_5$, we have the following pmf for $\Yv$:
\[  P(\Yv=\yv) = \left\{ \begin{array}{ll}
  1/120, & \mbox{  if $\yv = (5,0,0,0,0)'$}, \\
  1/12,  & \mbox{  if $\yv =(3,1,0,0,0)'$}, \\
  1/6, & \mbox{ if $\yv = (2,0,1,0,0)'$},\\
   1/8,  & \mbox{  if $\yv = (1,2,0,0,0)'$} \\
   1/4, & \mbox{  if $\yv =(1,0,0,1,0)'$}, \\
  1/6, & \mbox{  if $\yv =(0,1,1,0,0)'$}, \\
  1/5, & \mbox{ if $\yv = (0,0,0,0,1)'$}, \\
  0, & \mbox{   otherwise} .
\end{array}  \right.
\]
 Observe that $E(\Yv) = (1, \frac 12, \frac 13, \frac 14, \frac 15)'$ and the covariance 
 matrix
\[  \mathbf{\Sigma} = \CovFive . \]
 \end{example}

The preceding examples suggest that
  for general $n$, \[ E(\Yv) = \left(1, \frac 12, \frac 13, \dots, \frac 1n\right)'. \]
  Let us prove this and some other results.
  
  Let $\tv = (t_1, t_2, \dots, t_n)'$. 
 The joint moment generating function $M^{(n)}_\Yv(\tv)$ of $\Yv$ is
 \begin{align*} 
 M^{(n)}_\Yv(\tv) &= \underset{y_1 + 2y_2 + \cdots n y_n = n}{\sum_{(y_1, y_2, \dots, 
 y_n)}}
  \frac{ e^{y_1 t_1 + y_2 t_2 + \cdots + y_n t_n}} 
 {1^{y_1} 2^{y_2} \cdots n^{y_n} y_1! y_2! \cdots y_n!}
 \end{align*} 
 
 Notice that $M^{(n)}_{\Yv}(\tv) = 0$ if $n<0$, since it is the empty sum, and
 $M^{(0)}_{\Yv} = \sum_{\emptyset} \frac{e^0}{\mathrm{empty\ product}} = 1$.

In order to justify one of the steps in Theorem~\ref{thm1} below, we will require
the following simple lemma.
\begin{lemma} \label{lem0}
There exists a bijection between
the set of partitions of $n$ in which part $i$ appears at least once
and the set of unrestricted partitions of $n-i$.
\end{lemma}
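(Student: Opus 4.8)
The plan is to exhibit an explicit bijection and verify that it is well-defined and invertible. Write $\mathcal{P}_i(n)$ for the set of partitions of $n$ in which the part $i$ appears at least once, and $\mathcal{P}(n-i)$ for the set of unrestricted partitions of $n-i$. The natural map $\phi\colon \mathcal{P}_i(n) \to \mathcal{P}(n-i)$ to try is deletion of a single copy of the part $i$: given $\lambda \in \mathcal{P}_i(n)$, form $\phi(\lambda)$ by erasing one of its parts equal to $i$ and leaving the remaining parts in weakly decreasing order.

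First I would check that $\phi$ is well-defined. Since $\lambda$ has a part equal to $i$ by hypothesis, the deletion is possible; the resulting tuple is still weakly decreasing, and its parts now sum to $n-i$, so $\phi(\lambda) \in \mathcal{P}(n-i)$. This is cleanest in the multiplicity language of the paper: if $\mv(\lambda) = (m_1, \dots, m_n)'$ with $m_i \geq 1$, then $\phi$ simply replaces $m_i$ by $m_i - 1$, yielding a vector with nonnegative entries and weighted sum $1 m_1 + \cdots + i(m_i-1) + \cdots + n m_n = n-i$.

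Next I would construct the candidate inverse $\psi\colon \mathcal{P}(n-i) \to \mathcal{P}_i(n)$, adjoining a part $i$: given $\mu \in \mathcal{P}(n-i)$, insert one new part equal to $i$ and reorder weakly decreasingly, i.e.\ increment the $i$th multiplicity by $1$. The image lies in $\mathcal{P}_i(n)$, since the part sum becomes $n$ and the part $i$ now occurs at least once. Finally I would verify that the two maps are mutually inverse: decrementing then incrementing the $i$th multiplicity (or vice versa) returns the original vector, so $\psi \circ \phi = \mathrm{id}$ and $\phi \circ \psi = \mathrm{id}$, establishing the bijection.

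There is essentially no serious obstacle here; the only point demanding care is the bookkeeping when $m_i = 1$, where $\phi$ drops the multiplicity of $i$ to zero so that part $i$ vanishes from $\phi(\lambda)$ entirely. This is harmless, since the codomain consists of \emph{unrestricted} partitions. Phrasing everything in terms of multiplicity vectors makes well-definedness and the inverse relations transparent, and it sidesteps any ambiguity about \emph{which} copy of $i$ is removed.
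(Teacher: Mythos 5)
Your proof is correct and uses exactly the same map as the paper: decrementing the multiplicity $m_i$ by one (equivalently, deleting a single part equal to $i$), with the inverse given by incrementing it. Your write-up is slightly more explicit than the paper's in verifying the inverse relations, but the argument is essentially identical.
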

\begin{proof}
  Let $\lambda$ be a partition of $n$ in which $m_i(\lambda) > 0$.  
    Map $\lambda$ to the partition 
$\mu$ for which  \[ m_j(\mu) = \left\{ \begin{array}{ll} 
  m_j(\lambda) & \mbox{ if $j\neq i$, }\\
  m_j(\lambda) - 1 & \mbox{ if $j = i$ }
  \end{array} \right. . \]
 The partition $\mu$ is clearly a partition of $n-i$, and the map is reversible and is 
 therefore a bijection.
\end{proof}

\begin{theorem} \label{thm1}
 \[ \frac{\partial}{\partial t_i} M^{(n)}_\Yv (\tv) = \frac{ e^{t_i} }{ i } M^{(n-i)}_\Yv(\tv). \]
\end{theorem}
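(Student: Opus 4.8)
The plan is to differentiate the defining series for $M^{(n)}_\Yv(\tv)$ termwise and then recognize the result as a rescaled copy of the series for $M^{(n-i)}_\Yv(\tv)$, with the index shift supplied by Lemma~\ref{lem0}. First I would apply $\partial/\partial t_i$ inside the finite sum; since the exponent is $y_1 t_1 + \cdots + y_n t_n$, each term acquires a factor $y_i$. Every summand with $y_i = 0$ is thereby annihilated, so the surviving terms are exactly those indexed by multiplicity vectors of partitions of $n$ in which part $i$ occurs at least once---precisely the left-hand side of the bijection in Lemma~\ref{lem0}.

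Next I would simplify the factor $y_i$ against the factorial $y_i!$ in the denominator, using $y_i/y_i! = 1/(y_i-1)!$, which is valid since $y_i \geq 1$ on the surviving terms. At this point I would perform the change of summation variables dictated by Lemma~\ref{lem0}, namely $z_j := y_j$ for $j \neq i$ and $z_i := y_i - 1$. Under this substitution the weight condition $\sum_j j y_j = n$ becomes $\sum_j j z_j = n-i$; the exponential factors as $e^{t_i}\, e^{z_1 t_1 + \cdots + z_n t_n}$; the power $i^{y_i}$ in the denominator becomes $i \cdot i^{z_i}$; and the reduced factorial $(y_i-1)!$ becomes $z_i!$. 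Collecting these, each transformed summand equals $e^{t_i}/i$ times the generic summand of $M^{(n-i)}_\Yv(\tv)$.

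Finally I would pull the constant $e^{t_i}/i$ outside the sum and identify the remaining series with $M^{(n-i)}_\Yv(\tv)$. The one point that needs a word of justification---and which I expect to be the only real obstacle---is that the shifted sum ranges over vectors $(z_1, \dots, z_n)$ of length $n$, whereas $M^{(n-i)}_\Yv$ is defined as a sum over vectors of length $n-i$. This is harmless: the constraint $\sum_j j z_j = n-i$ forces $z_j = 0$ whenever $j > n-i$, so the extra coordinates contribute nothing and the two series coincide. The boundary cases are also consistent with the conventions recorded in the excerpt, since for $i > n$ there are no surviving terms and $M^{(n-i)}_\Yv = 0$, while for $i = n$ the sum collapses to $M^{(0)}_\Yv = 1$.
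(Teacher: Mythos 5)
Your proof is correct and takes essentially the same route as the paper's: termwise differentiation annihilates the $y_i=0$ terms, the factor $y_i$ is absorbed via $y_i/y_i! = 1/(y_i-1)!$, and your change of variables $z_i = y_i - 1$ is exactly the bijection of Lemma~\ref{lem0} that the paper invokes for its penultimate equality. Your explicit handling of the length discrepancy (the constraint forcing $z_j = 0$ for $j > n-i$) and of the boundary cases is a detail the paper leaves implicit, but it does not constitute a different argument.
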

\begin{proof}
  \begin{align*}
  \frac{\partial}{\partial t_i} M^{(n)}_\Yv (\tv) &=  
   \frac{\partial}{\partial t_i} \underset{y_1 + 2y_2 + \cdots n y_n = n}{\sum_{(y_1, y_2, \dots, y_n)}}
   \frac{ e^{y_1 t_1 + y_2 t_2 + \cdots + y_n t_n}} 
 {1^{y_1} 2^{y_2} \cdots n^{y_n} y_1! y_2! \cdots y_n!} \\
 & = \frac{\partial}{\partial t_i} 
 \underset{y_i > 0}{
 \underset{y_1 + 2y_2 + \cdots n y_n = n}{\sum_{(y_1, y_2, \dots, y_n)}}}
   \frac{ e^{y_1 t_1 + y_2 t_2 + \cdots + y_n t_n}} 
 {1^{y_1} 2^{y_2} \cdots n^{y_n} y_1! y_2! \cdots y_n!} \\
&=  \underset{y_i > 0}{
 \underset{y_1 + 2y_2 + \cdots n y_n = n}{\sum_{(y_1, y_2, \dots, y_n)}}}
   \frac{ y_i e^{y_1 t_1 + y_2 t_2 + \cdots + y_n t_n}} 
 {1^{y_1} 2^{y_2} \cdots n^{y_n} y_1! y_2! \cdots y_n!} \\
&= \frac{e^{t_i}}{i} 
{\underset{y_1 + 2y_2 + \cdots n y_n = n}{\sum_{(y_1, y_2, \dots, y_n)}}}
   \frac{i  y_i e^{y_1 t_1 + y_2 t_2 + \cdots + y_{i-1}t_{i-1} + (y_i-1)t_i + y_{i+1}t_{i+1} + 
   \cdots + y_n t_n}} 
 {1^{y_1} 2^{y_2} \cdots n^{y_n} y_1! y_2! \cdots y_n!} \\
&= \frac{e^{t_i}}{i} 
 \underset{y_1 + 2y_2 + \cdots (n-i) y_{n-i} = n-i }{\sum_{(y_1, y_2, \dots, y_{n-i} )}}
   \frac{ e^{y_1 t_1 + y_2 t_2 +  \cdots + y_{n-i} t_{n-i} }} 
 {1^{y_1} 2^{y_2} \cdots (n-i)^{y_{n-i} } y_1! y_2! \cdots y_{n-i} !} \\
&= \frac{e^{t_i}}{i} M_{\Yv}^{(n-i)} (\tv) ,
  \end{align*}
where the penultimate equality follows from Lemma~\ref{lem0}.
\end{proof}

\begin{example} For clarity,  let us observe Theorem~\ref{thm1} in the case $n=5$, 
$i=2$.
  \begin{align*} 
   \frac{\partial}{\partial t_2}    M_{\Yv}^{(5)} (\tv) &= 
     \frac{\partial}{\partial t_2}  \left(
   \frac{e^{5t_1}}{120} + \frac{e^{3t_1 + t_2}}{12} + \frac{e^{2t_1 + t_3}}{6} +
        \frac{e^{t_1 + 2 t_2}}{8} + \frac{e^{t_1 + t_4}}{4} + \frac{e^{t_2 + t_3}}{6} + 
        \frac{e^{t_5}}{5} \right) \\
        &=  1\frac{e^{3t_1 + t_2}}{12}  +
       2 \frac{e^{t_1 + 2 t_2}}{8} + 1\frac{e^{t_2 + t_3}}{6} \\
       & = \frac{e^{t_2}}{2} \left( \frac{e^{3t_1}}{6}  +
        \frac{e^{t_1 + t_2 }}{2} + \frac{e^{t_3}}{3} \right) \\
      &= \frac{e^{t_2}}{2}  M_{\Yv}^{(3)} (\tv)
  \end{align*}
 \end{example}
 
 With Theorem~\ref{thm1} in hand, we can use it to derive the moments in the usual 
 way.  
 In particular, we have the following results.
 \begin{corollary} \label{EY}
 \[
   E(\Yv) = \left( 1, 1/2, 1/3, \dots, 1/n \right)'. \]
 \end{corollary}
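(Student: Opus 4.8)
The plan is to extract the first moments $E(Y_i)$ directly from Theorem~\ref{thm1}, using the standard fact that the first partial derivative of a joint moment generating function, evaluated at the origin $\tv = \mathbf{0}$, returns the corresponding first moment. That is, I would begin from
\[ E(Y_i) = \left. \frac{\partial}{\partial t_i} M_{\Yv}^{(n)}(\tv) \right|_{\tv = \mathbf{0}} \]
for each $i$ with $1 \le i \le n$.

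Next I would substitute the identity supplied by Theorem~\ref{thm1}, obtaining
\[ E(Y_i) = \left. \frac{e^{t_i}}{i}\, M_{\Yv}^{(n-i)}(\tv) \right|_{\tv = \mathbf{0}} = \frac{1}{i}\, M_{\Yv}^{(n-i)}(\mathbf{0}), \]
since $e^{0} = 1$. This reduces the problem to evaluating $M_{\Yv}^{(n-i)}(\mathbf{0})$.

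The key observation is that for any $m \ge 0$, the quantity $M_{\Yv}^{(m)}(\mathbf{0})$ is simply the sum of the probability mass function over its entire support, because setting $\tv = \mathbf{0}$ collapses each exponential factor $e^{y_1 t_1 + \cdots + y_m t_m}$ to $1$. For $m > 0$ this sum equals $1$ by Eq.~\eqref{fine} (the normalization identity of N.~J.~\citet{F}), and for $m = 0$ it equals $1$ by the base case $M_{\Yv}^{(0)} = 1$ recorded just before Lemma~\ref{lem0}. Hence $M_{\Yv}^{(n-i)}(\mathbf{0}) = 1$ for every $i$ with $1 \le i \le n$, so that $E(Y_i) = 1/i$. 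Assembling these componentwise yields $E(\Yv) = (1, 1/2, 1/3, \dots, 1/n)'$, as claimed.

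I do not anticipate a genuine obstacle: the whole argument is essentially a one-line application of Theorem~\ref{thm1} combined with the normalization \eqref{fine}. The only point meriting a moment of care is justifying the evaluation at $\tv = \mathbf{0}$---that is, the legitimacy of differentiating under the summation sign---but since each $M_{\Yv}^{(n)}(\tv)$ is a \emph{finite} sum of exponentials (the support is finite), no convergence or interchange issues arise.
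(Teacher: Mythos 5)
Your proof is correct and matches the paper's intended argument: the paper simply states that Corollary~\ref{EY} follows from Theorem~\ref{thm1} ``in the usual way,'' which is precisely your computation of $\left.\frac{\partial}{\partial t_i} M_{\Yv}^{(n)}(\tv)\right|_{\tv=\mathbf{0}} = \frac{1}{i} M_{\Yv}^{(n-i)}(\mathbf{0}) = \frac{1}{i}$, using the normalization~\eqref{fine} and the base case $M_{\Yv}^{(0)}=1$. You also correctly handle the $i=n$ edge case and note that finiteness of the sum removes any interchange-of-limits concerns, so nothing is missing.
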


\begin{remark}
Corollary~\ref{EY} tells us that in $\Sym_n$, the expected number of $i$-cycles in a 
randomly selected permutation
is $1/i$ for $1\leq i \leq n$.
\end{remark}

\begin{remark}
Corollary~\ref{EY} can be deduced from~\citet[Eq. (14)]{Z}.
\end{remark}

\begin{corollary} $E(\Yv\Yv')$ can be decomposed into a sum of
 two matrices  \[ E(\Yv \Yv') = \Am + \Bm, \] where $\Am = (a_{ij})_{1\leq i, j \leq n}$ with
 \[ a_{ij} =  \left\{ \begin{array}{ll}
                         {1}/(ij), & \mbox{if $i\leq j$ and $j < n-i$, } \\
                         0 , & \mbox{if $i\leq j$ and $j\geq n-i$,} \\
                         a_{ji}, & \mbox{otherwise}
                   \end{array} \right. ;
                   \] and $\Bm = (b_{ij})_{1\leq i,j \leq n}$ with
  \[ b_{ij} =  \left\{ \begin{array}{ll}
                         1/i, & \mbox{if $i= j$,} \\          
                         0,   & \mbox{otherwise}
                   \end{array} \right. . \]
  \end{corollary}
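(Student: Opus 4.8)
The plan is to read off $E(Y_iY_j)$, the $(i,j)$ entry of $E(\Yv\Yv')$, as the mixed second partial $\frac{\partial^2}{\partial t_i\,\partial t_j}M^{(n)}_\Yv(\tv)$ evaluated at $\tv=\mathbf{0}$, exactly as one extracts second moments from a joint moment generating function. The engine is Theorem~\ref{thm1}, each application of which strips one exponential factor and lowers the superscript; the second ingredient is the evaluation $M^{(m)}_\Yv(\mathbf{0})=1$ for $m\ge 0$ (by Eq.~\eqref{fine}, together with the given $M^{(0)}_\Yv=1$) and $M^{(m)}_\Yv(\mathbf{0})=0$ for $m<0$ (empty sum). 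I would split into the off-diagonal and diagonal cases, since differentiating twice in the same variable triggers the product rule and produces an extra term, and that extra term is precisely what distinguishes $\Am$ from $\Bm$.

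For $i\ne j$, two successive applications of Theorem~\ref{thm1}, first in $t_j$ and then in $t_i$ (the indices being distinct, so no product rule arises), give
\[ \frac{\partial^2}{\partial t_i\,\partial t_j}M^{(n)}_\Yv(\tv)=\frac{e^{t_i+t_j}}{ij}\,M^{(n-i-j)}_\Yv(\tv). \]
Setting $\tv=\mathbf{0}$ yields $E(Y_iY_j)=\frac{1}{ij}M^{(n-i-j)}_\Yv(\mathbf{0})$, which equals $1/(ij)$ when $n-i-j\ge 0$ and $0$ otherwise. This is the entire off-diagonal content (with $b_{ij}=0$ there), and the symmetry of the expression in $i,j$ accounts for the clause $a_{ij}=a_{ji}$ when $i>j$.

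For $i=j$ the first differentiation gives $\frac{e^{t_i}}{i}M^{(n-i)}_\Yv(\tv)$, and differentiating once more in $t_i$ — now by the product rule, with a second use of Theorem~\ref{thm1} on the factor $M^{(n-i)}_\Yv$ — produces $\frac{e^{t_i}}{i}M^{(n-i)}_\Yv(\tv)+\frac{e^{2t_i}}{i^2}M^{(n-2i)}_\Yv(\tv)$. Evaluating at $\tv=\mathbf{0}$ and using $M^{(n-i)}_\Yv(\mathbf{0})=1$ (valid since $i\le n$) gives $E(Y_i^2)=\frac{1}{i}+\frac{1}{i^2}M^{(n-2i)}_\Yv(\mathbf{0})$. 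The first summand is exactly $b_{ii}=1/i$, and the second is the diagonal entry of $\Am$, nonzero precisely when $n-2i\ge 0$.

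The step I expect to demand the most care is the boundary, where a lowered superscript lands on $0$. Because $M^{(0)}_\Yv(\mathbf{0})=1$ rather than $0$, the off-diagonal entry is nonzero exactly when $i+j\le n$, and the diagonal $\Am$-entry is nonzero exactly when $2i\le n$; for instance the $n=5$ table gives $E(Y_1Y_4)=1/4$ at $i+j=n=5$. Tracking this boundary correctly is what pins down the inequalities defining $\Am$, so in the write-up I would record the nonzero region as $j\le n-i$ and the zero region as $j>n-i$ (equivalently $i+j\le n$ versus $i+j>n$), rather than the strict/weak pair as currently displayed.
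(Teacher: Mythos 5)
Your proposal is correct, and it is essentially the paper's own (implicit) argument: the paper gives no explicit proof beyond the remark that Theorem~\ref{thm1} lets one ``derive the moments in the usual way,'' and iterating Theorem~\ref{thm1} on the joint mgf --- with the product rule producing the extra term $\frac{e^{t_i}}{i}M^{(n-i)}_\Yv(\tv)$ in the diagonal case, which is exactly the matrix $\Bm$ --- is that derivation. Your boundary observation is also right, and worth stating plainly: the corollary as printed has an off-by-one error. Your computation gives $E(Y_iY_j)=\tfrac{1}{ij}$ precisely when $i+j\le n$ (i.e., $j\le n-i$), not when $j<n-i$; this is confirmed by the paper's own $n=5$ example, where $E(Y_1Y_4)=1\cdot\tfrac14=\tfrac14\neq 0$ even though $j=n-i$, and by consistency with the paper's subsequent covariance corollary, which asserts $\Sigma_{ij}=0$ for $i<j\le n-i$; since $\Sigma_{ij}=E(Y_iY_j)-\tfrac{1}{ij}$ off the diagonal, that forces $E(Y_iY_j)=\tfrac{1}{ij}$ exactly on the region you identify. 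The same correction applies on the diagonal: your formula makes $a_{ii}=\tfrac{1}{i^2}$ exactly when $2i\le n$, matching the covariance corollary's condition $i\le n/2$, whereas the printed condition $j<n-i$ would wrongly give $E(Y_i^2)=\tfrac1i$ at $2i=n$ (e.g., $E(Y_2^2)=\tfrac34$, not $\tfrac12$, for $n=4$). So your proof establishes the corrected statement; the strict/weak inequalities in the displayed corollary should be swapped as you indicate.
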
     

Of course, it is always possible to write a covariance matrix as
 \[ \mathbf{\Sigma} = E(\Yv \Yv') - E(\Yv) E(\Yv'), \] and so we may deduce the
following result.

  \begin{corollary} The entries of the covariance matrix $\Sig$ are given by
  \[  \Sigma_{ij} = \left\{  
  \begin{array}{ll}
     1/i, & \mbox{ if $i=j$ and $i\leq n/2$;} \\
     (i-1)/{i^2}, & \mbox{ if $i=j$ and $i> n/2$;} \\
    0, & \mbox{ if $i<j$ and $j\leq n-i$;} \\
    -1/(ij), & \mbox{  if $i<j$ and $j>n-i$;} \\
    \Sigma_{ji}, & \mbox{ if $i>j$. }
  \end{array}  \right.  \]
\end{corollary}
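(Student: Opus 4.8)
The plan is to start from the standard identity $\Sig = E(\Yv\Yv') - E(\Yv)E(\Yv')$. By Corollary~\ref{EY} the outer-product term $E(\Yv)E(\Yv')$ is the rank-one matrix whose $(i,j)$ entry is $\frac1i\cdot\frac1j=\frac{1}{ij}$, so the whole problem reduces to computing the entries of $E(\Yv\Yv')$ and subtracting $1/(ij)$. I could simply quote the preceding corollary, which asserts $E(\Yv\Yv')=\Am+\Bm$, but to keep the diagonal boundary transparent I would instead recompute the two relevant second moments directly by iterating Theorem~\ref{thm1}, using that $M^{(m)}_{\Yv}(\mathbf 0)=1$ for $m\geq 0$ (by Eq.~\eqref{fine}) and $M^{(m)}_{\Yv}=0$ for $m<0$.

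For the off-diagonal entries ($i\neq j$) I would differentiate $M^{(n)}_{\Yv}$ once in $t_i$ and once in $t_j$. Since Theorem~\ref{thm1} gives $\partial_{t_i}M^{(n)}_{\Yv}=\frac{e^{t_i}}{i}M^{(n-i)}_{\Yv}$ and the factor $e^{t_i}/i$ is free of $t_j$, a second application of the theorem to $M^{(n-i)}_{\Yv}$ yields $\partial_{t_j}\partial_{t_i}M^{(n)}_{\Yv}=\frac{e^{t_i}}{i}\,\frac{e^{t_j}}{j}\,M^{(n-i-j)}_{\Yv}$. Evaluating at $\tv=\mathbf 0$ gives $E(Y_iY_j)=\frac{1}{ij}M^{(n-i-j)}_{\Yv}(\mathbf 0)$, which equals $1/(ij)$ when $i+j\leq n$ and $0$ when $i+j>n$. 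Subtracting $1/(ij)$ produces $\Sigma_{ij}=0$ when $i+j\leq n$ and $\Sigma_{ij}=-1/(ij)$ when $i+j>n$; rewriting $i+j\leq n$ as $j\leq n-i$ for $i<j$ gives exactly the third and fourth cases of the statement.

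For the diagonal entries I would differentiate $M^{(n)}_{\Yv}$ twice in the same variable $t_i$. Here the product rule contributes an extra term, giving $\partial_{t_i}^2 M^{(n)}_{\Yv}=\frac{e^{t_i}}{i}M^{(n-i)}_{\Yv}+\frac{e^{2t_i}}{i^2}M^{(n-2i)}_{\Yv}$, where the second summand comes from applying Theorem~\ref{thm1} a second time to $M^{(n-i)}_{\Yv}$. At $\tv=\mathbf 0$ this gives $E(Y_i^2)=\frac{1}{i}+\frac{1}{i^2}M^{(n-2i)}_{\Yv}(\mathbf 0)$, so $E(Y_i^2)=\frac1i+\frac1{i^2}$ when $2i\leq n$ and $E(Y_i^2)=\frac1i$ when $2i>n$. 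Subtracting $(E(Y_i))^2=1/i^2$ yields $\Sigma_{ii}=1/i$ for $i\leq n/2$ and $\Sigma_{ii}=(i-1)/i^2$ for $i>n/2$, matching the first two cases. The final case $\Sigma_{ij}=\Sigma_{ji}$ for $i>j$ is immediate from the symmetry of any covariance matrix.

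The main obstacle is bookkeeping at the boundaries rather than any hard estimate. I would need to check that the iterated use of Theorem~\ref{thm1} is legitimate even when the reduced superscript forces a part index beyond the available range: when $j>n-i$ (resp. $2i>n$) the variable $t_j$ (resp. $t_i$) no longer appears in $M^{(n-i)}_{\Yv}$, so the second derivative simply vanishes, which is exactly what the convention $M^{(m)}_{\Yv}=0$ for $m<0$ records. The one place demanding care is the diagonal threshold $i=n/2$ when $n$ is even: there $n-2i=0$, so $M^{(0)}_{\Yv}(\mathbf 0)=1$ and the $1/i^2$ term survives, placing $i=n/2$ in the $\Sigma_{ii}=1/i$ branch and confirming the nonstrict inequality $i\leq n/2$ in the statement.
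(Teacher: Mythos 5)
Your proof is correct, and its skeleton --- writing $\Sig = E(\Yv\Yv') - E(\Yv)E(\Yv')$ and extracting second moments by iterating Theorem~\ref{thm1} --- is the same one the paper intends: the paper's entire ``proof'' is the sentence preceding the corollary, which invokes this identity together with the earlier corollary asserting $E(\Yv\Yv')=\Am+\Bm$. However, your decision to recompute $E(Y_iY_j)$ and $E(Y_i^2)$ directly from Theorem~\ref{thm1}, rather than quoting that corollary, is more than a stylistic choice: it is what makes the argument actually close. As printed, $\Am$ has $a_{ij}=1/(ij)$ only when $j<n-i$ and $a_{ij}=0$ when $j\geq n-i$, i.e., strict inequality at the boundary $i+j=n$ (and, on the diagonal, at $2i=n$). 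Quoting it literally would give, for $n=5$, $E(Y_1Y_4)=0$ and hence $\Sigma_{14}=-1/4$, contradicting both the covariance corollary as stated (which puts $j=4\leq n-i=4$ in the zero case) and the example matrix for $n=5$; the true value is $E(Y_1Y_4)=1/4$, since the only partition of $5$ containing both a $1$ and a $4$ is $(4,1)$, with probability $1/4$. Your computation $E(Y_iY_j)=\frac{1}{ij}M^{(n-i-j)}_{\Yv}(\mathbf{0})$ produces the correct non-strict condition $i+j\leq n$, and likewise $E(Y_i^2)=\frac{1}{i}+\frac{1}{i^2}M^{(n-2i)}_{\Yv}(\mathbf{0})$ gives the correct threshold $i\leq n/2$ (your check at $i=n/2$, $n$ even, is exactly the delicate point). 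Your handling of the degenerate iterations --- when $t_j$ is absent from $M^{(n-i)}_{\Yv}$ the derivative vanishes, matching the convention $M^{(m)}_{\Yv}=0$ for $m<0$ --- is also the right justification for applying Theorem~\ref{thm1} out of range. In short, your proof validates the covariance corollary as stated, and as a byproduct shows that the inequalities in the preceding corollary on $E(\Yv\Yv')$ should read $j\leq n-i$ and $j>n-i$ rather than $j<n-i$ and $j\geq n-i$.
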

  
 \section{The distribution of the random variable $\Xv$}
 We now turn our attention to the random variable $\Xv$, first observing the pmf
in the $n=4$ case.
 \begin{example}[$n=4$ case]
In the case arising from $\Sym_4$, we have the following pmf for $\Xv$:
\[  P(\Xv=\xv) = \left\{ \begin{array}{ll}
  1/24, & \mbox{  if $\xv = (1,1,1,1)'$}, \\
  1/4,  & \mbox{  if $\xv =(2,1,1,0)'$}, \\
   1/8,  & \mbox{  if $\xv = (2,2,0,0)'$} \\
  1/3, & \mbox{  if $\xv =(3,1,0,0)'$}, \\
  1/4, & \mbox{  if $\xv =(4,0,0,0)'$}, \\
  0, & \mbox{   otherwise} .
\end{array}  \right.
\]
\end{example}                   
                   
 The moments of $\Xv$ are not as easily described as those of $\Yv$.   
 When we need to consider more than one $n$ at a time, let us write $\Xv = \Xv^{(n)} = 
 (X_1^{(n)}, X_2^{(n)}, \dots X_n^{(n)})'.$
 We note that by direct calculation, we find that $n! E(X_1^{(n)})$ for $n=1,2,3,\dots,10$ 
 is the sequence
 \[ \{1, 3, 13, 67, 411, 2911, 23\ 563, 213\ 543, 2\ 149\ 927, 23\ 759\ 791 \}, \]
 which is in the 
 \emph{Online Encyclopedia of Integer Sequences}~\citet[sequence A028418]{OEIS}.  
 We may interpret that, e.g., in $\Sym_4$, the expected largest cycle length is 
 $67/24\approx 2.792$;
 in $\Sym_5$, the expected largest cycle length is $411/120 = 3.425$, etc.      
 
 If we similarly calculate $n! E(X_2^{(n)})$ for $n=2,3,4,\dots$, we obtain the sequence
 \[ \{1, 4, 21, 131, 950, 7694, 70343, \dots \}, \] which is not directly in the OEIS, 
 although both of the
 preceding appear as subsequences of A322384 (in exactly the context at hand:
 the entry gives the ``number $T(n,k)$ of entries in the $k$th cycles of all permutations 
of $[n]$ 
when the cycles are ordered by decreasing lengths'').
 
 We can easily conclude that $E(X_n^{(n)})  = 1/n!$ since there is only one partition of 
 $n$ with $n$ parts (the partition consisting of $n$ $1$s),
 and this partition occurs with probability $1/n!$.   
 
All of the subsequent formulas are conjectural; the author guesses them based on
calculations performed with \emph{Mathematica}.  
For $n\geq 3$, 
 \begin{equation} \label{x1} 
 n!E(X^{(n)}_{n-1}) = \frac{n^2 - n + 2}{2} = \binom{n}{2} + 1;  \end{equation}
 for $n\geq 5$, 
 \begin{equation} \label{x2} 
 n! E( X^{(n)}_{n-2}) = \frac{3n^4 - 10n^3 + 21 n^2 - 14n + 24}{24} =
 3\binom{n}{4} + 2 \binom{n}{3} + \binom{n}{2} + 1
 ; \end{equation}
 and for $n\geq 7$, 
 \begin{align} n! E(X^{(n)}_{n-3}) &= 
 \frac{n^6 - 7 n^5 + 23 n^4 - 37 n^3 + 48 n^2 - 28n + 48}{48} \notag \\
 &= 15 \binom n6 + 20 \binom n5 + 9 \binom n4 + 2 \binom n3 + \binom n2 + 1. 
 \label{x3}
 \end{align}
 The idea for expressing the preceding sequence of polynomials in terms of a 
 ``binomial basis'' was suggested by the work of~\cite{BEK}.

 It appears plausible from the above and for analogous data for larger $j$, that
$n! E( X^{(n)}_{n-j})$ 
is a polynomial in $n$ of degree $2j$ for
all $n \geq 2j+1$, and that
\[ n! E( X^{(n)}_{n-j})  = \frac{n^{2j}}{ j! 2^j} + \frac{2j+1}{3\cdot 2^j (j-1)! } n^{2j-1} 
+  O(n^{2j-2}), \]
 which would follow from the conjecture that
 \begin{equation} \label{xj} n! E( X^{(n)}_{n-j})  = 1 + \sum_{i=2}^{2j} a_i \binom ni, 
 \end{equation}
for some positive integers $a_2, a_3, \dots, a_{2j}$; with $a_{2j} = (2j-1)!!$, 
$$a_{2j-1} = \frac{(2j+1)!!}{3} - (2j-1)!! ,$$ $a_4 = 9$ (when $ j\geq 3$), $a_3 = 2$, 
$a_2 = 1$.  Here we are using the double factorial notation; recall that 
\[ (2n+1)!! = \frac{(2n+1)!}{2^n n!}. \]

 \section{Conclusion}
   As already remarked, previous research on random partitions has focused on 
both unrestricted and restricted partitions of the integer $n$, but invariably with
such partitions being chosen with equal probability.  Here we consider some of the
consequences of choosing the random partition with a non-uniform, but nonetheless
natural assignment of probabilities, and observe the effect of representing the partition
in two different ways: by multiplicities and by parts.  It is hoped that this study
will help open the door to future studies of random partitions with other 
probability distributions.

 \section*{Acknowledgments}
 The author thanks Charles Champ, Dennis Eichhorn, Brandt Kronholm, Broderick 
 Oluyede, and Robert Schneider for helpful conversations.  The author thanks the
 referees for useful comments that have helped improve the exposition of this paper.
 
 \bibliographystyle{natbib-harv}

\end{document}